\newtheorem{theorem}{Theorem}[section]
\newtheorem{lemma}[theorem]{Lemma}
\newtheorem{proposition}[theorem]{Proposition}
\newtheorem{definition}[theorem]{Definition}
\newtheorem{remark}[theorem]{Remark}
\def\XXint#1#2#3{{\setbox0=\hbox{$#1{#2#3}{\int}$ }
\vcenter{\hbox{$#2#3$ }}\kern-.6\wd0}}
\title{Remark on the Stability of Energy Maximizers for the 2D Euler equation on $\mathbb{T}^2$}
\author{Tarek M. Elgindi}
\date{\today}
\begin{document}

\maketitle

\begin{center}
\emph{Dedicated to Vladimir \v{S}ver\'ak on the occasion of his 65th birthday.}
\end{center}

\begin{abstract}
It is well-known that the first energy shell, \[\mathcal{S}_1^{c_0}:=\{\alpha \cos(x+\mu)+\beta\cos(y+\lambda): \alpha^2+\beta^2=c_0\,\, \&\,\, (\mu,\lambda)\in\mathbb{R}^2\}\] of solutions to the 2d Euler equation is Lyapunov stable on $\mathbb{T}^2$.  This is simply a consequence of the conservation of energy and enstrophy. Using the idea of Wirosoetisno and Shepherd \cite{WS}, which is to take advantage of conservation of a properly chosen Casimir, we give a simple and quantitative proof of the $L^2$ stability of single modes up to translation. In other words, each \[\mathcal{S}_1^{\alpha,\beta}:=\{\alpha \cos(x+\mu)+\beta\cos(y+\lambda): (\mu,\lambda)\in\mathbb{R}^2\}\] is Lyapunov stable. Interestingly, our estimates indicate that the extremal cases $\alpha=0,$ $\beta=0$, and $\alpha=\pm\beta$ may be markedly less stable than the others. 
\end{abstract}

\tableofcontents

\section{Introduction}

Recall the 2d Euler equation in vorticity form \begin{equation}\label{2dEulerI}\partial_t\omega+u\cdot\nabla\omega=0,\end{equation}
\begin{equation}\label{2dEulerII}u=\nabla^\perp(-\Delta)^{-1}\omega.\end{equation} In studying the dynamics of solutions to \eqref{2dEulerI}-\eqref{2dEulerII}, it is important to recall the conservation laws enjoyed by regular solutions. Indeed, on any smooth domain, we have that the energy
\begin{equation}\label{Energy}E(\omega):=\int |u|^2\end{equation} is conserved\footnote{It is customary to take the no-penetration boundary condition $u\cdot n=0$ on the boundary of the domain, though we will only be concerned with domains without boundary here.}. We also have the conservation of all Casimirs:
\begin{equation}\label{Casimirs}H_{f}(\omega):=\int f(\omega).\end{equation} The conservation of the Casimirs \eqref{Casimirs} is equivalent to the statement that any regular solution to the Euler equation is always a volume preserving rearrangement of its initial data. All of these conservation laws put infinitely many constraints on solutions to \eqref{2dEulerI}-\eqref{2dEulerII} and thus play a crucial role in describing their dynamics. 
In particular, they play a vital role in the study of (nonlinear) stability of steady states. 
 
The most powerful tool for studying nonlinear stability that we are aware of is Kelvin's variational principle \cite{K}, which was placed in a general setting by Arnold \cite{arnold,AK} in the 1960's. Arnold's theory of stability, which is expounded upon extensively in the notes of V. \v{S}ver\'ak \cite{SverakNotes}, is based on a simple minimization principle using the conserved quantities. A simple example of this principle is 
\begin{lemma}
Consider the ordinary differential equation on $\mathbb{R}^d:$
\begin{equation}\label{ODE}\frac{d}{dt}x = N(x),\end{equation} for some smooth $N:\mathbb{R}^d\rightarrow\mathbb{R}^d.$ Assume that $N$ has a first integral $E:\mathbb{R}^d\rightarrow\mathbb{R},$ in other words that $\frac{d}{dt} E(x)=0,$ for every solution $x$ to \eqref{ODE}. If $x_*$ is a strict local extremizer of $E,$ then $x_*$ is a Lyapunov stable steady solution of \eqref{ODE}.
\end{lemma}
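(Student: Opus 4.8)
The plan is to run the standard ``energy barrier'' argument, treating the strict local \emph{minimizer} case (the maximizer case follows by replacing $E$ with $-E$). A preliminary point is that $x_*$ is automatically an equilibrium of \eqref{ODE}: since $E$ is a first integral, the solution $x(t)$ with $x(0)=x_*$ satisfies $E(x(t))=E(x_*)$ for all $t$ in its interval of existence; as $x_*$ is a \emph{strict} local minimizer, there is an open neighborhood $U\ni x_*$ with $E(x)>E(x_*)$ for $x\in U\setminus\{x_*\}$, so by continuity of $t\mapsto x(t)$ the solution must coincide with $x_*$ for all small $t$, and differentiating at $t=0$ gives $N(x_*)=0$. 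Thus the constant curve $x_*$ is indeed a steady solution.

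For stability, fix $\varepsilon>0$ small enough that the closed ball $\overline{B(x_*,\varepsilon)}$ lies in $U$. The key quantity is the barrier height
\[
m \ :=\ \min_{\abs{x-x_*}=\varepsilon}\bigl(E(x)-E(x_*)\bigr),
\]
the minimum of a continuous function over the compact sphere $\{\abs{x-x_*}=\varepsilon\}\subset U\setminus\{x_*\}$, hence attained and strictly positive, $m>0$. By continuity of $E$ at $x_*$ there is $\delta\in(0,\varepsilon)$ such that $\abs{x-x_*}<\delta$ implies $E(x)-E(x_*)<m$.

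Now suppose $\abs{x(0)-x_*}<\delta$. Let $[0,T)$ be the maximal forward existence interval and set $t_0:=\inf\{t\in[0,T):\abs{x(t)-x_*}=\varepsilon\}$ (with $\inf\emptyset=T$). If $t_0<T$, then conservation of $E$ gives $E(x(t_0))-E(x_*)=E(x(0))-E(x_*)<m$, contradicting the definition of $m$ since $\abs{x(t_0)-x_*}=\varepsilon$. Hence $t_0=T$, so $x(t)\in\overline{B(x_*,\varepsilon)}$ for all $t\in[0,T)$; since this ball is compact, a standard continuation argument forces $T=\infty$, and we conclude $\abs{x(t)-x_*}<\varepsilon$ for all $t\ge0$. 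This is precisely Lyapunov stability.

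I do not expect a genuine obstacle in the finite-dimensional statement; the only points needing a little care are the reduction showing $x_*$ is an equilibrium and the continuation argument guaranteeing the solution is globally defined forward in time so that the ``first escape time'' $t_0$ is meaningful. It is worth emphasizing, though, that this scheme is the template for the infinite-dimensional arguments to come: the sphere-minimum $m>0$ encodes the coercivity of the conserved functional around the extremizer, and in the PDE setting the substantive work is to replace the compactness used here by an explicit quantitative lower bound of this type.
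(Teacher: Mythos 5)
Your argument is correct and complete; the paper itself omits the proof (stating only that it is elementary), and the energy-barrier argument you give --- positivity of the sphere-minimum $m$ plus conservation of $E$ to rule out a first escape time --- is precisely the standard elementary proof the author is alluding to. The preliminary observation that $x_*$ is forced to be an equilibrium and the continuation step ensuring global forward existence are both handled correctly.
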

The proof is elementary. 
Note that this result can be extended to the case when \eqref{ODE} has other first integrals $H_i$ with the weaker assumption that $x_*$ is an extremizer of $E$ on a single leaf $\cap_{i}\{H_i=c_i\}$ (see \cite[Theorem 3.3]{AK}).  For the Euler equation, the energy \eqref{Energy} is a first integral; moreover, a single solution is always a rearrangement of its initial data. This motivates Arnold's notion of hydrodynamic stability\footnote{In fact, we are giving a more general notion than the one given in Arnold's original paper that was given by Burton in \cite{Burton} (it can be argued that this concept is even present in Kelvin's short note \cite{K}).}: 
\begin{definition}
Fix a two dimensional domain $\Omega.$ We say that $\omega_*$ is an Arnold-stable 2d Euler steady state if it is a strict local extremizer of $E$ among all volume preserving rearrangements of $\omega_*.$ 
\end{definition}
Arnold-stable steady states exist on all simply connected domains; in fact, one can construct an infinite-dimensional family of Arnold-stable steady states on any simply connected domain \cite{CS, CDG}. See also \cite{GS} for recent advances on Arnold stable solutions on $\mathbb{R}^2$ and the vanishing viscosity limit. On spatially periodic domains (like rectangular or square tori), the situation is quite different. It is not difficult to see that there are no non-trivial Arnold stable steady states on $\mathbb{T}^2$, due to translation invariance. In fact, the author is unaware of \emph{any} non-trivial Lyapunov stable steady state on $\mathbb{T}^2$. The purpose of this work is to review a part of the picture on steady states on $\mathbb{T}^2$ of particular interest: the global maxima of energy.

\subsection{Main Theorem}
If we restrict our attention to vorticities with unity $L^2$ norm, we find that any element of the three-dimensional manifold \[\mathcal{S}^1_{1}=\{\alpha \cos(x+\mu)+\beta \cos(y+\lambda): \alpha^2+\beta^2=1, (\mu,\lambda)\in\mathbb{R}^2\}\] is a global maximizer of energy. Since all of these are obviously not \emph{strict} global maximizers, we just get that $\mathcal{S}^1_1$ itself is $L^2$ stable. Whether any particular element of $\mathcal{S}^1_1$ is stable is an open problem; however, it turns out that one can use conservation of the Casimirs to slightly restrict the dynamics.  Now we state our main theorem regarding the sets $\mathcal{S}_1^{\alpha,\beta}:$
\[\mathcal{S}_1^{\alpha,\beta}=\{\alpha \cos(x+\mu)+\beta\cos(y+\lambda): (\mu,\lambda)\in\mathbb{R}^2\}.\] For ease of exposition, we take $\alpha^2+\beta^2=1$.
\begin{theorem}
\label{MainTheorem}
 Define the "extremal set" $\mathcal{E}:=\{(\pm 1,0),(0,\pm 1),(\pm\frac{1}{\sqrt{2}},\pm\frac{1}{\sqrt{2}})\},$ consisting of eight points on the unit circle. If $(\alpha,\beta)\in \mathcal{E},$ there exists $C>0$ so that for all $\epsilon$ sufficiently small, we have that
\[d(\omega_0,\mathcal{S}_1^{\alpha,\beta})<\epsilon \implies d(\omega(t), \mathcal{S}_1^{\alpha,\beta})<C\sqrt{\epsilon},\] for all $t\in\mathbb{R}.$ In contrast, for each $(\alpha,\beta)\in \{\alpha^2+\beta^2=1\}\setminus \mathcal{E},$ there exists a constant $C>0$ so that for all $\epsilon$ sufficiently small, we have that  
\[d(\omega_0,\mathcal{S}_1^{\alpha,\beta})<\epsilon \implies d(\omega(t), \mathcal{S}_1^{\alpha,\beta})<C \epsilon,\]
for all $t\in\mathbb{R}.$
\end{theorem}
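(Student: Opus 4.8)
The plan is to run the Wirosoetisno--Shepherd strategy with three conserved functionals simultaneously: the energy $E(\omega)=\int|u|^2$, the enstrophy $\|\omega\|_{L^2}^2$, and the quartic Casimir $\omega\mapsto\int\omega^4$. All three are invariant under the translations of $\mathbb{T}^2$, and since $\omega(t)$ is a rearrangement of $\omega_0$ one has $\|\omega(t)\|_{L^\infty}=\|\omega_0\|_{L^\infty}$; I will take initial data bounded in $L^\infty$ and $\epsilon$-close in $L^2$ to $\mathcal{S}_1^{\alpha,\beta}$, and, to keep the quartic Casimir legitimate for such data, replace $t\mapsto t^4$ by a smooth compactly supported $f$ that agrees with it on a fixed interval containing the essential ranges of all vorticities appearing, so that $H_f$ is a bounded Lipschitz Casimir equal to the fourth moment wherever evaluated. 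Fix a nearest point $\omega_*\in\mathcal{S}_1^{\alpha,\beta}$ to $\omega_0$, put $\phi_0=\omega_0-\omega_*$ with $\|\phi_0\|_{L^2}<\epsilon$, assume (as one may) that $\omega_0$ has zero mean, and normalize so that $\|\omega_*\|_{L^2}^2=E(\omega_*)=1$, which coincide precisely because $\widehat{\omega_*}$ is supported on $|k|=1$. The first move is to localize the solution to the first shell \emph{without losing the exponent}: because $\widehat{\omega_*}$ sits where $1-|k|^{-2}$ vanishes, $\|\omega_0\|_{L^2}^2-E(\omega_0)=\sum_{|k|^2\ge 2}(1-|k|^{-2})|\widehat{\phi_0}(k)|^2\le\|\phi_0\|_{L^2}^2<\epsilon^2$, so conservation of $E$ and $\|\cdot\|_{L^2}^2$ together with the spectral gap $\|\omega\|_{L^2}^2-E(\omega)\ge\tfrac12\|\omega-P_1\omega\|_{L^2}^2$ (where $P_1$ is the orthogonal projection onto $\mathrm{span}\{\cos x,\sin x,\cos y,\sin y\}$) forces $\|\omega(t)-P_1\omega(t)\|_{L^2}\le\sqrt{2}\,\epsilon$ for all $t$. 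Writing $P_1\omega(t)=\tilde\alpha(t)\cos(x+\tilde\mu)+\tilde\beta(t)\cos(y+\tilde\lambda)$ with $\tilde\alpha,\tilde\beta\ge 0$ and using $\|\omega_0\|_{L^2}^2=1+O(\epsilon)$, one gets $\tilde\alpha(t)^2+\tilde\beta(t)^2=1+O(\epsilon)$; at this stage $d(\omega(t),\mathcal{S}_1^{c_0})=O(\epsilon)$ with $c_0\approx 1$, which is just the classical energy-shell stability, and the remaining task is to pin down the unordered pair $\{\tilde\alpha^2,\tilde\beta^2\}$.

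Next I would use the quartic Casimir to fix the product $\alpha^2\beta^2$. A short computation with the circle integrals $\int\cos^2=\pi$, $\int\cos^4=\tfrac{3\pi}{4}$ shows that every element of $\mathcal{S}_1^{a,b}$ has $\int(\,\cdot\,)^4=c_1(a^2+b^2)^2+c_2\,a^2b^2$ for fixed positive constants $c_1,c_2$; hence $H_f(\omega_*)=c_1+c_2\,\alpha^2\beta^2$, and by the Lipschitz bound on $f$ together with the $L^\infty$ bound, $H_f(\omega_0)=H_f(\omega_*)+O(\epsilon)$. The remainder $r(t):=\omega(t)-P_1\omega(t)$ satisfies $\|r(t)\|_{L^4}^4\le\|r(t)\|_{L^\infty}^2\|r(t)\|_{L^2}^2=O(\epsilon^2)$ while $P_1\omega(t)$ stays uniformly bounded, so expanding $\int\omega(t)^4$ around $P_1\omega(t)$ costs only $O(\epsilon)$, and conservation of $H_f$ gives $c_1(\tilde\alpha^2+\tilde\beta^2)^2+c_2\,\tilde\alpha^2\tilde\beta^2=c_1+c_2\,\alpha^2\beta^2+O(\epsilon)$. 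Combined with the previous paragraph, this yields, uniformly in $t$, both $\tilde\alpha(t)^2+\tilde\beta(t)^2=1+O(\epsilon)$ and $\tilde\alpha(t)^2\tilde\beta(t)^2=\alpha^2\beta^2+O(\epsilon)$.

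At this point the argument reduces to an elementary algebra problem, and this is where the dichotomy appears. The pair $\{\tilde\alpha^2,\tilde\beta^2\}$ consists of the real nonnegative roots of $z^2-sz+p$ with $(s,p)=(1,\alpha^2\beta^2)+O(\epsilon)$, whereas $\{\alpha^2,\beta^2\}$ are the roots of $z^2-z+\alpha^2\beta^2$, of discriminant $(\alpha^2-\beta^2)^2$. If $(\alpha,\beta)\notin\mathcal{E}$, i.e. $\alpha,\beta>0$ and $\alpha^2\ne\beta^2$, this discriminant is bounded below, so $(s,p)\mapsto\{\text{roots}\}$ is Lipschitz near $(s,p)$ and $\{\tilde\alpha^2,\tilde\beta^2\}=\{\alpha^2,\beta^2\}+O(\epsilon)$; since $\alpha^2,\beta^2$ stay away from $0$, taking positive square roots preserves the $O(\epsilon)$, so matching phases and adding back $\|r(t)\|_{L^2}=O(\epsilon)$ gives $d(\omega(t),\mathcal{S}_1^{\alpha,\beta}\cup\mathcal{S}_1^{\beta,\alpha})=O(\epsilon)$. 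If $\alpha=\pm\beta$, then $p=\tfrac14$ and the discriminant vanishes, so $s^2-4p=(\tilde\alpha^2-\tilde\beta^2)^2\ge 0$ is itself only $O(\epsilon)$, whence $\sqrt{s^2-4p}=O(\sqrt{\epsilon})$ and $\tilde\alpha^2,\tilde\beta^2=\tfrac12+O(\sqrt{\epsilon})$: one square root is lost. If $\beta=0$, then $p=0$ and $\{\tilde\alpha^2,\tilde\beta^2\}=\{1,0\}+O(\epsilon)$, so the large amplitude is recovered to $O(\epsilon)$ but the small one only to $O(\sqrt{\epsilon})$ after the square root --- again a $\sqrt{\epsilon}$ loss, now carried by the component orthogonal to $\mathcal{S}_1^{1,0}$. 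In every case $d(\omega(t),\mathcal{S}_1^{\alpha,\beta}\cup\mathcal{S}_1^{\beta,\alpha})\le C\sqrt{\epsilon}$, and $\le C\epsilon$ when $(\alpha,\beta)\notin\mathcal{E}$.

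Finally I would drop $\mathcal{S}_1^{\beta,\alpha}$ from this bound: whenever it differs from $\mathcal{S}_1^{\alpha,\beta}$ (that is, $\alpha\ne\beta$) the two are compact and separated by a fixed positive $L^2$ distance, hence their small neighborhoods are disjoint, and since $t\mapsto\omega(t)$ is continuous in $L^2$, remains in the union of these neighborhoods, and starts in the $\mathcal{S}_1^{\alpha,\beta}$ one, it stays there for all $t\in\mathbb{R}$ (all functionals used being conserved for all time). The step I expect to be the main obstacle is the \emph{sharp-exponent bookkeeping}: the improvement of the classical $\sqrt{\epsilon}$ to $\epsilon$ in the localization step rests on the exact vanishing of $1-|k|^{-2}$ on the first shell, and the $\sqrt{\epsilon}$-versus-$\epsilon$ dichotomy is then dictated entirely by the problem of reconstructing $\{\alpha^2,\beta^2\}$ from its two elementary symmetric functions --- governed by the discriminant $(\alpha^2-\beta^2)^2$ and by the extra square root needed to pass from $\tilde\beta^2$ to $\tilde\beta$ at the coordinate axes. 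It is worth noting that no further Casimir can help here, since every even moment of a first-shell function is a polynomial in $s=\tilde\alpha^2+\tilde\beta^2$ and $p=\tilde\alpha^2\tilde\beta^2$; the enstrophy already supplies $s$ and a single quartic Casimir already supplies $p$, so the whole Casimir content relevant to the first shell has been used. A secondary, purely technical, nuisance is making the quartic Casimir rigorous via the truncated $H_f$ and propagating the $L^\infty$ bound through the rearrangement property so as to control $\|r(t)\|_{L^4}$.
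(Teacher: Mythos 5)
Your proposal is correct and follows the same strategy as the paper: localize to the first shell with no loss in $\epsilon$ using energy and enstrophy (your spectral-gap step is exactly the paper's Proposition 2.1), then use a quartic Casimir to pin down $(\alpha,\beta)$, with the $\sqrt{\epsilon}$-versus-$\epsilon$ dichotomy coming from the degeneracy of that Casimir at the eight extremal points. Two implementation differences are worth recording. First, where the paper parametrizes the shell by $\theta$ and reduces to two one-variable calculus lemmas about $F(\theta)=\tfrac{3\pi^2}{2}(\tfrac54-\tfrac14\cos 4\theta)$ (nonvanishing $F'$ versus nonvanishing $F''$), you recover the unordered pair $\{\tilde\alpha^2,\tilde\beta^2\}$ from its elementary symmetric functions and read the dichotomy off the discriminant $(\alpha^2-\beta^2)^2$; the two are equivalent, and your version has the merit of separating the two distinct sources of the square-root loss (vanishing discriminant on the diagonal, versus the passage from $\tilde\beta^2=O(\epsilon)$ to $\tilde\beta=O(\sqrt{\epsilon})$ on the axes), as well as making precise why no further Casimir can improve matters. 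Second, and this is the one place where your argument proves something strictly weaker: you legitimize the quartic Casimir by truncating $t\mapsto t^4$ on an interval containing the essential range of $\omega_0$, so your Lipschitz constant --- and hence the constant $C$ in the conclusion --- depends on $\|\omega_0\|_{L^\infty}$, which is not controlled by the hypothesis $d(\omega_0,\mathcal{S}_1^{\alpha,\beta})<\epsilon$. The paper avoids this by cutting off at a height proportional to the \emph{conserved} quantity $|u|_{L^2}$, i.e.\ $I(\omega)=\int|\omega|^4\chi\bigl(\tfrac{\omega}{10|u|_{L^2}}\bigr)$, which is still exactly conserved, agrees with $\int\omega^4$ on the first shell, and is Lipschitz on $L^2$ with a constant depending only on the energy; adopting that cutoff removes the extra hypothesis from your argument with no other changes.
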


\begin{remark}
As examples, the steady states $\cos(y)$ and $\frac{1}{\sqrt{2}}(\cos(x)+\cos(y))$ are extremal while the steady state $\frac{1}{2}\cos(x)+\frac{\sqrt{3}}{2}\cos(y)$ is not extremal (thus possibly more stable).
\end{remark}

\subsection{Remarks on the Main Theorem}
The idea of the proof originates in the paper of Wirosoetisno and Shepherd \cite{WS}, which is to combine the original argument of Arnold with the conservation of higher norms of vorticity (to distinguish between different points on the first shell). The argument of \cite{WS} does not imply $L^2$ stability because of its use of higher order norms. The stability argument here is a bit different and we also make use of a different conserved quantity that is well-defined for $L^2$ solutions. We should remark that, very recently, the authors of \cite{WZ} established a \emph{non-quantitative} stability result based again on the idea of \cite{WS}. The argument of \cite{WZ} is by contradiction and uses the whole transport of vorticity rather than just a single Casimir (in the spirit of Burton's work \cite{Burton}). One advantage of the argument given here is that we give an elementary proof along with quantitative bounds and also provide evidence that different elements of the first energy shell may be more stable than others (i.e. extremal vs. non-extremal values of $(\alpha,\beta)$ in Theorem \ref{MainTheorem}). In particular, exact shear flows and exact cellular flows on the first shell appear to be less stable than those flows with regions of both shearing as well as so-called cat's-eye structures.

It would be very interesting to determine whether the first stability bound in Theorem \ref{MainTheorem} is actually sharp. One potential explanation for the weaker stability of extremal points, which was offered by T. Drivas, is that if one varies $(\alpha,\beta)$ continuously, the extremal points are precisely the points at which there is a break in the topology of the streamlines. This implies that the foliation of the space of vorticities by the isovortical leaves may be degenerate precisely at the extremal points. To further investigate whether extremal points truly enjoy different stability properties, it would be interesting to study the structure of the set of steady states close to extremal and non-extremal points. A strong degeneracy was shown to be present near the shear flows on the first shell (which are extremal) in \cite{CZEWII}. For comparison, it would be good to study the non-extremal case; perhaps there one can establish a result in the spirit of \cite{CS}. It would also be very interesting to know whether \emph{individual} elements of the first shell are stable (i.e. taking the velocity to have zero-mean and removing the translation); perhaps the ideas of \cite{DEJ} could be applied to that problem. Let us remark finally that the stability of the first Fourier shell is relevant also in the analysis of the long-time behavior of solutions to the Navier-Stokes equations \cite{CTV}. In fact, there is a recent interesting numerical computation, done by T. Drivas, of the stochastically forced Navier-Stokes system that shows the solution travelling on the first shell \url{https://www.youtube.com/watch?v=8H4Xee6-_7g}. The computation qualitatively shows that the exact shear states do not appear to be as stable as states with cat's eyes.

\section{Proof of the Main Theorem}
The basic idea of the proof is to first recall that the whole of the first shell, $\mathcal{S}^{1}_1$ is $L^2$ stable. This follows from the conservation of energy and enstrophy. This means that in order to establish the stability of $S^{\alpha,\beta}_1$, we need only show that there exists an invariant of the Euler equation, $I:L^2\rightarrow\mathbb{R},$ with the property that 
$I(\mathcal{S}^{\alpha,\beta})$ (which is in fact only a single number that depends on $(\alpha,\beta)$) varies as we vary $(\alpha,\beta).$ Conservation of $I$ will then imply the all-time stability of $\mathcal{S}^{\alpha,\beta}_1$. This can be made quantitative by showing that the derivative of the value of $I(\mathcal{S}^{\alpha,\beta})$ along the circle $\alpha^2+\beta^2=1$ either doesn't vanish (which is the case at the non-extremal points) or vanishes but only to first order (which is the case at extremal points).

\subsection{$L^2$ Stability of the First Shell}\label{FirstShell}
We start by showing the stability of $\mathcal{S}^1_1.$
Let us begin with some notation.  We denote by \[\mathbb{Z}^2_0:=\mathbb{Z}^2\setminus\{(0,0)\}.\] We define $\mathbb{P}$ to be the orthogonal $L^2$ projector onto $\mathcal{S}_1$ and denote by \[\mathbb{P}^\perp=\text{Id}-\mathbb{P}.\] 
\begin{proposition}
Let $\omega$ be a smooth solution to the 2d Euler equation. Then, 
\[|\mathbb{P}^\perp\omega|_{L^2}\leq \sqrt{2}|\mathbb{P}^\perp\omega_0|_{L^2}.\]
\end{proposition}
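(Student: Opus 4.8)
The plan is to exploit the two conserved quantities that are quadratic and quartic in $\omega$: the energy $E(\omega)=\int|u|^2 = \sum_{k\in\Z^2_0}|k|^{-2}|\hat\omega(k)|^2$ and the enstrophy $\|\omega\|_{L^2}^2 = \sum_{k\in\Z^2_0}|\hat\omega(k)|^2$. Write $\omega = \PP\omega + \PP^\perp\omega$, where $\PP\omega$ lives on the first shell $\{|k|=1\}$ (the four modes $(\pm1,0),(0,\pm1)$) and $\PP^\perp\omega$ is supported on $\{|k|\ge\sqrt2\}$, i.e.\ $|k|^2\ge 2$. On the first shell the multiplier $|k|^{-2}$ equals $1$, and on the complement it is at most $\tfrac12$. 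Hence
\[
E(\omega) = \|\PP\omega\|_{L^2}^2 + \sum_{|k|\ge\sqrt2}|k|^{-2}|\hat\omega(k)|^2 \le \|\PP\omega\|_{L^2}^2 + \tfrac12\|\PP^\perp\omega\|_{L^2}^2.
\]

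First I would combine this with conservation of enstrophy. Since $\|\PP\omega\|_{L^2}^2 = \|\omega\|_{L^2}^2 - \|\PP^\perp\omega\|_{L^2}^2$, the inequality above becomes
\[
E(\omega) \le \|\omega\|_{L^2}^2 - \tfrac12\|\PP^\perp\omega\|_{L^2}^2,
\]
that is, $\tfrac12\|\PP^\perp\omega\|_{L^2}^2 \le \|\omega\|_{L^2}^2 - E(\omega)$. Both $\|\omega\|_{L^2}^2$ and $E(\omega)$ are conserved along the flow, so the right-hand side is exactly the constant $\|\omega_0\|_{L^2}^2 - E(\omega_0)$. Applying the same inequality at time $0$ only the other way is not needed; rather, I observe that the chain of inequalities shows $\|\omega_0\|_{L^2}^2 - E(\omega_0) \ge \tfrac12\|\PP^\perp\omega_0\|_{L^2}^2 \ge 0$ as well. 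Actually what I want is the reverse bound at time $0$: I need $\|\omega_0\|_{L^2}^2 - E(\omega_0) \le \|\PP^\perp\omega_0\|_{L^2}^2$, which follows from the crude lower bound $E(\omega_0)\ge \|\PP\omega_0\|_{L^2}^2 = \|\omega_0\|_{L^2}^2 - \|\PP^\perp\omega_0\|_{L^2}^2$ (dropping the nonnegative tail). Putting the two together,
\[
\tfrac12\|\PP^\perp\omega(t)\|_{L^2}^2 \le \|\omega_0\|_{L^2}^2 - E(\omega_0) \le \|\PP^\perp\omega_0\|_{L^2}^2,
\]
which gives $\|\PP^\perp\omega(t)\|_{L^2} \le \sqrt2\,\|\PP^\perp\omega_0\|_{L^2}$, as claimed.

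The steps in order are therefore: (i) record the Fourier-side formulas for $E$ and the enstrophy and note $|k|^{-2}=1$ on $\{|k|=1\}$ and $|k|^{-2}\le\tfrac12$ on $\operatorname{supp}\PP^\perp$; (ii) derive $E(\omega)\le\|\PP\omega\|_{L^2}^2+\tfrac12\|\PP^\perp\omega\|_{L^2}^2$ and the trivial $E(\omega)\ge\|\PP\omega\|_{L^2}^2$; (iii) substitute $\|\PP\omega\|_{L^2}^2 = \|\omega\|_{L^2}^2-\|\PP^\perp\omega\|_{L^2}^2$ to get $\tfrac12\|\PP^\perp\omega\|_{L^2}^2 \le \|\omega\|_{L^2}^2 - E(\omega) \le \|\PP^\perp\omega\|_{L^2}^2$; (iv) invoke conservation of $E$ and enstrophy to freeze the middle quantity at its $t=0$ value and conclude. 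There is essentially no analytic obstacle here — the argument is purely algebraic once the spectral gap between $|k|^2=1$ and $|k|^2=2$ is used. The only mild subtlety is the regularity/justification of using Fourier series and the conservation laws, but for smooth solutions, as hypothesized, this is standard; the constant $\sqrt2$ is exactly the ratio $\sqrt{1/(1/2)}$ coming from the first spectral gap and is presumably not optimal, but it is all that this simple argument yields.
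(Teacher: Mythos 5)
Your proof is correct and is essentially identical to the paper's: both arguments hinge on the conserved quantity $\|\omega\|_{L^2}^2-E(\omega)=\sum_{k}(1-|k|^{-2})|\hat\omega(k)|^2$, which vanishes on the first shell and is comparable to $\|\mathbb{P}^\perp\omega\|_{L^2}^2$ (between $\tfrac12$ and $1$ times it) thanks to the spectral gap $|k|^2\ge 2$ off the shell. The paper compresses your steps (i)--(iv) into a single chain of inequalities, but the content and the constant $\sqrt2$ are the same.
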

\begin{proof}
We have that 
\[\frac{1}{2}|\mathbb{P}^\perp\omega|_{L^2}^2\leq \sum_{k\in\mathbb{Z}^2_0}(1-\frac{1}{|k|^2})|\hat{\omega}(k)|^2=|\omega|_{L^2}^2-|u|_{L^2}^2=|\omega_0|_{L^2}^2-|u_0|_{L^2}^2=\sum_{k\in\mathbb{Z}^2_0}(1-\frac{1}{|k|^2})|\hat{\omega}_0(k)|^2\leq |\mathbb{P}^\perp\omega_0|_{L^2}^2,\] where we used on both sides that the multiplier is zero when $|k|=1.$
\end{proof}

\subsection{Two calculus lemmas}
We will need two simple calculus lemmas. First, let us state the one we will apply at non-extremal points:
\begin{lemma}\label{CalculusLemmaI}
Assume $f:\mathbb{R}\rightarrow\mathbb{S}^1$ is continuous. Assume that $F:\mathbb{S}^1\rightarrow\mathbb{R}$ is smooth and that $F'(\theta_0)\not=0.$ 
Then, there exists a fixed $C:=C(F,\theta_0)>0$ so that if $\epsilon>0$ is sufficiently small and 
\[|F(f(t))-F(\theta_0)|+|f(0)-\theta_0|<\epsilon,\]  then we have that
\[|f(t)-\theta_0|<C\epsilon,\] for all $t\in\mathbb{R}.$
\end{lemma}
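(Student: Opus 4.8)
The plan is to convert the hypothesis $F'(\theta_0)\neq 0$ into a one-sided local invertibility bound for $F$ near $\theta_0$, and then run a continuity (clopen) argument on the time line, using connectedness of $\mathbb{R}$ together with continuity of $f$ to prevent $f$ from drifting toward a different preimage of the value $F(\theta_0)$.

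First I would pass to a chart identifying a short arc around $\theta_0\in\mathbb{S}^1$ with an interval about the origin, so that $F'(\theta_0)\neq0$ reads as $F'\neq0$ near $\theta_0$. By continuity of $F'$ there are $\delta>0$ and $c>0$, depending only on $F$ and $\theta_0$, such that $|F'|\ge c$ on the ball $B_\delta(\theta_0)\subset\mathbb{S}^1$; the mean value theorem then gives $|F(\theta)-F(\theta_0)|\ge c\,|\theta-\theta_0|$ for all $\theta\in B_\delta(\theta_0)$. This is the only place the assumption on $F$ is used.

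Next, fix $\epsilon<\min(1,c)\,\delta$ and assume $|F(f(t))-F(\theta_0)|+|f(0)-\theta_0|<\epsilon$ for every $t\in\mathbb{R}$. Consider $A:=\{t\in\mathbb{R}:|f(t)-\theta_0|<\delta\}$. It is open since $f$ is continuous, and $0\in A$ because $|f(0)-\theta_0|<\epsilon<\delta$. For $t\in A$ the estimate above applies, so $|f(t)-\theta_0|\le\tfrac1c|F(f(t))-F(\theta_0)|<\epsilon/c$, and since $\epsilon/c<\delta$ this in particular shows that along any sequence $t_n\to t$ with $t_n\in A$ one has $|f(t)-\theta_0|\le\epsilon/c<\delta$; hence $A$ is also closed. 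As $\mathbb{R}$ is connected, $A=\mathbb{R}$, and therefore $|f(t)-\theta_0|<\epsilon/c$ for all $t$. Taking $C:=1/c$ (which depends only on $F$ and $\theta_0$) completes the proof.

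The one real obstacle is the global non-invertibility of $F$: it may well take the value $F(\theta_0)$ at other points of $\mathbb{S}^1$, so no purely pointwise estimate can work, and one genuinely needs continuity of $f$ (equivalently, connectedness of the time interval) to transport the local bound at $t=0$ to all times. The remaining ingredients — choosing $\delta$ and $c$, the mean value estimate, and the open/closed dichotomy — are entirely routine.
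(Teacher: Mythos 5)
Your proof is correct and is essentially the argument the paper has in mind: the paper only remarks that the lemma ``follows from Taylor expanding $F$ around $\theta_0$ and using the continuity of $f$,'' and your mean value estimate $|F(\theta)-F(\theta_0)|\ge c\,|\theta-\theta_0|$ on $B_\delta(\theta_0)$ plus the clopen/connectedness argument is precisely a careful write-up of that sketch. Nothing is missing.
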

The second lemma will be applied at extremal points:
\begin{lemma}\label{CalculusLemmaII}
Under the same assumptions as Lemma \ref{CalculusLemmaI}, except that $F'(\theta_0)=0,$ $F''(\theta_0)\not=0,$ we conclude that \[|f(t)-\theta_0|<C\sqrt{\epsilon},\] for all $t\in\mathbb{R}.$
\end{lemma}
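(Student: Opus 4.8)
The plan is to localize the problem near $\theta_0$ via Taylor's theorem and then run a confinement (barrier) argument structurally identical to the one behind Lemma~\ref{CalculusLemmaI}, the sole difference being that here $F-F(\theta_0)$ is controlled from below only by a \emph{quadratic}, which is precisely what turns the final bound from $\epsilon$ into $\sqrt{\epsilon}$.

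First I would normalize: since only $|F(f(t))-F(\theta_0)|$ enters the hypothesis, replacing $F$ by $-F$ if necessary lets me assume $F''(\theta_0)>0$, so that $\theta_0$ is a strict local minimum of $F$ on $\mathbb{S}^1$. Taylor's theorem then supplies some $\delta\in(0,\pi)$ and $c:=\tfrac14 F''(\theta_0)>0$ with
\[
F(\theta)-F(\theta_0)\ \ge\ c\,|\theta-\theta_0|^2\qquad\text{whenever }|\theta-\theta_0|\le\delta,
\]
and in particular the barrier estimate $F(\theta)-F(\theta_0)\ge c\delta^2$ on the set $\{|\theta-\theta_0|=\delta\}$.

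Next I would fix $\epsilon$ with $0<\epsilon<\min(\delta,\,c\delta^2)$ and assume $|F(f(t))-F(\theta_0)|+|f(0)-\theta_0|<\epsilon$ for all $t$; then in particular $|F(f(t))-F(\theta_0)|<\epsilon$ for every $t$ and $|f(0)-\theta_0|<\epsilon<\delta$. The claim is that $f$ stays in the arc $\{|\theta-\theta_0|\le\delta\}$ for all time. If not, choose $t_1$ with $|f(t_1)-\theta_0|>\delta$; as $t\mapsto|f(t)-\theta_0|$ is continuous and $[0,t_1]$ (or $[t_1,0]$) is connected, the intermediate value theorem yields some $t_2$ between $0$ and $t_1$ with $|f(t_2)-\theta_0|=\delta$, hence $|F(f(t_2))-F(\theta_0)|\ge c\delta^2>\epsilon$, a contradiction. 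With $f$ thus confined, the quadratic bound gives $c\,|f(t)-\theta_0|^2\le|F(f(t))-F(\theta_0)|<\epsilon$, so $|f(t)-\theta_0|<C\sqrt{\epsilon}$ with $C:=c^{-1/2}=2/\sqrt{F''(\theta_0)}$, a constant depending only on $F$ and $\theta_0$ (the admissible size of $\delta$, which dictates how small $\epsilon$ must be, likewise depends only on $F$ and $\theta_0$), as claimed.

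I expect the confinement step to be the only part that needs care: the smallness of $|F(f(t))-F(\theta_0)|$ is information local to $\theta_0$, so one must exclude the trajectory wandering off to some far region of $\mathbb{S}^1$ where $F$ is uncontrolled; this is a purely topological matter, resolved by continuity of $f$ on the connected line $\mathbb{R}$ together with the quantitative choice $\epsilon<c\delta^2$, which renders the barrier at distance $\delta$ strictly unreachable. Everything else is a one-line Taylor expansion, and the argument runs in exact parallel to Lemma~\ref{CalculusLemmaI}, with the linear lower bound $|F(\theta)-F(\theta_0)|\gtrsim|\theta-\theta_0|$ there replaced by the quadratic one here.
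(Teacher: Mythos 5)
Your argument is correct and is exactly the route the paper intends: its proof of both calculus lemmas consists of the single remark that they ``follow from Taylor expanding $F$ around $\theta_0$ and using the continuity of $f$,'' which is precisely your quadratic Taylor lower bound plus the intermediate-value-theorem confinement step. Nothing further is needed.
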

Both lemmas are elementary and follow from Taylor expanding $F$ around $\theta_0$ and using the continuity of $f.$

\subsection{A conserved quantity that distinguishes values of $(\alpha,\beta)$}

The reason that the standard Arnold method, which here would consist of maximizing the energy for fixed enstrophy, only gives stability of $\mathcal{S}^1_1$ and not $\mathcal{S}^{\alpha,\beta}_1$ is that both the energy and the enstrophy are constant on $\mathcal{S}^{\alpha,\beta}_1$. In particular, one cannot distinguish the sets $\mathcal{S}^{\alpha,\beta}_1$ using only the knowledge that the enstrophy and energy are conserved. As brilliantly observed by Wirosoetisno and Shepherd in \cite{WS}, we can distinguish the various values of $(\alpha,\beta)$ using a higher order Casimir, like the $L^4$ norm of vorticity. Unfortunately, using the $L^4$ norm will not allow us to deduce stability in $L^2$. We use a small modification of the $L^4$ norm with a conserved quantity that is not technically a Casimir. 
Indeed, for any measurable function $\omega\in \dot{H}^{-1}$, we may define the following quantity:
\[I(\omega)=\int |\omega|^4\chi\Big(\frac{\omega}{10|u|_{L^2}}\Big),\] where $\chi\in C^\infty(\mathbb{R})$ is even and equal to 1 on $[0,1]$ and equal to 0 on $[2,\infty).$
When $\omega$ solves the 2d Euler equation, we of course have that \[I(\omega)=I(\omega_0).\]
Let us now define:
\[\mathcal{S}_{*}^1=\{\alpha\cos(x)+\beta\cos(y): \alpha^2+\beta^2=1\}.\] Now writing $(\alpha,\beta)=(\cos(\theta),\sin(\theta))$, we may define \[F(\theta)=I(\cos(\theta) \cos(x)+\sin(\theta)\cos(y)),\] for $\theta\in \mathbb{S}^1.$
The key observation is the following.

\begin{lemma}\label{J0Lemma}
$F'(\theta)=0$ if and only if $\theta=\frac{\pi}{4}k,$ $k\in\mathbb{Z}.$ Moreover,  $F''(\frac{\pi}{4}k)\not=0,$ for $k\in\mathbb{Z}.$
\end{lemma}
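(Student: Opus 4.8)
The plan is to show that, when restricted to the shell $\mathcal{S}_*^1$, the functional $I$ coincides with the ordinary $L^4$ norm raised to the fourth power, and then to evaluate the resulting elementary trigonometric integral. The first step is to observe that $|u|_{L^2}$ takes the \emph{same} value at every element of $\mathcal{S}_*^1$: since $\cos x$ and $\cos y$ are supported on the Fourier modes $(1,0)$ and $(0,1)$, both of which have $|k|=1$, the operator $(-\Delta)^{-1}$ acts as the identity on $\omega=\cos\theta\cos x+\sin\theta\cos y$, whence $|u|_{L^2}^2=|\omega|_{L^2}^2=\cos^2\theta\,|\cos x|_{L^2}^2+\sin^2\theta\,|\cos y|_{L^2}^2$, which is independent of $\theta$; call this common value $M^2$ (with the normalization $\mathbb{T}^2=\mathbb{R}^2/(2\pi\mathbb{Z})^2$ one has $M^2=2\pi^2$). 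Because $\|\cos\theta\cos x+\sin\theta\cos y\|_{L^\infty}\le|\cos\theta|+|\sin\theta|\le\sqrt2$, which is far smaller than $10M$, the argument $\tfrac{\omega}{10M}$ of $\chi$ remains inside $[-1,1]$, where $\chi\equiv 1$. Hence for $\omega\in\mathcal{S}_*^1$ we have simply $I(\omega)=\int_{\mathbb{T}^2}\omega^4$, so that $F(\theta)=\int_{\mathbb{T}^2}(\cos\theta\cos x+\sin\theta\cos y)^4$.

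Next I would compute this integral by expanding the fourth power. The two odd cross terms $\cos^3x\cos y$ and $\cos x\cos^3 y$ integrate to zero, and using $\int_{\mathbb{T}^2}\cos^4x=\tfrac{3\pi^2}{2}$ and $\int_{\mathbb{T}^2}\cos^2x\cos^2y=\pi^2$ one obtains $F(\theta)=\tfrac{3\pi^2}{2}(\cos^4\theta+\sin^4\theta)+6\pi^2\cos^2\theta\sin^2\theta$. Writing $\cos^4\theta+\sin^4\theta=1-\tfrac12\sin^2 2\theta$ and $\cos^2\theta\sin^2\theta=\tfrac14\sin^2 2\theta$ collapses this to $F(\theta)=\tfrac{3\pi^2}{2}+\tfrac{3\pi^2}{4}\sin^2 2\theta=\tfrac{15\pi^2}{8}-\tfrac{3\pi^2}{8}\cos 4\theta$.

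From this closed form the conclusion is immediate: $F'(\theta)=\tfrac{3\pi^2}{2}\sin 4\theta$, which vanishes exactly when $4\theta\in\pi\mathbb{Z}$, i.e. $\theta=\tfrac{\pi}{4}k$; and $F''(\theta)=6\pi^2\cos 4\theta$, so $F''(\tfrac{\pi}{4}k)=6\pi^2(-1)^k\ne 0$. There is no genuine analytic difficulty here. The only point that truly requires care — and the reason the modified quantity $I$ is introduced in place of the $L^4$ Casimir — is the first step, namely the verification that the truncation $\chi$ is inactive on (and in a neighborhood of) the shell: this makes $F$ literally equal to the $L^4$-functional there, while $I$ itself, thanks to the cutoff, still extends to a bona fide conserved quantity for merely-$L^2$ (indeed $\dot H^{-1}$) solutions of the Euler equation.
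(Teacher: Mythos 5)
Your proof is correct and follows essentially the same route as the paper: both reduce $F$ to the elementary trigonometric integral $\int_{\mathbb{T}^2}(\cos\theta\cos x+\sin\theta\cos y)^4$ and arrive at the same closed form $F(\theta)=\tfrac{3\pi^2}{2}\bigl(\tfrac54-\tfrac14\cos 4\theta\bigr)$, hence $F'(\theta)=\tfrac{3\pi^2}{2}\sin 4\theta$. The only difference is that you explicitly verify that the cutoff $\chi$ is identically $1$ on the shell (since $\|\omega\|_{L^\infty}\le\sqrt2\ll 10|u|_{L^2}=10\pi\sqrt2$), a step the paper leaves implicit in the phrase ``by definition of $I$''; this is a worthwhile clarification but not a different argument.
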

\begin{proof}
By definition of $I,$ we have that 
\begin{align}
    F(\theta)&=\int_{\mathbb{T}^2}(\cos(\theta)\cos(x)+\sin(\theta)\cos(y))^4dxdy\\  &=2\pi\cos^4(\theta)\int_{0}^{2\pi}\cos^4(x)dx+6\pi^2\cos^2(\theta)\sin^2(\theta)+2\pi\sin^4(\theta)\int_0^{2\pi}\cos^4(y)dy\\
    &=\frac{3\pi^2}{2}(\cos^4(\theta)+4\cos^2(\theta)\sin^2(\theta)+\sin^4(\theta))=\frac{3\pi^2}{2}(1+2\cos^2(\theta)-2\cos^4(\theta))\\
    &=\frac{3\pi^2}{2}(\frac{5}{4}-\frac{1}{4}\cos(4\theta)).
\end{align}
We thus see that $F'(\theta)=\frac{3\pi^2}{2}\sin(4\theta),$ from which the result follows. 

\end{proof}

Next, we define $\mathbb{P}_*:\mathcal{S}_1\rightarrow\mathcal{S}_*$ by 
\[\mathbb{P}_*(\alpha\cos(x+\mu)+\beta\cos(y+\lambda))=\alpha\cos(x)+\beta\cos(y).\]

Now we see that Lemma \ref{J0Lemma} combined with the calculus Lemmas \ref{CalculusLemmaI}-\ref{CalculusLemmaII} below easily imply the Main Theorem \ref{MainTheorem}. Indeed, fix $(\alpha,\beta)$ and suppose that $\omega_0$ is smooth and that $dist(\omega_0, \mathcal{S}^{\alpha,\beta}_1)<\epsilon.$ It follows that 
\[|\mathbb{P}^\perp\omega_0|_{L^2}\leq \sqrt{2}|\mathbb{P}^\perp\omega_0|<\sqrt{2}\epsilon.\] 
It follows that 
\[|I(\omega)-I(\mathbb{P}\omega)|\leq C|\mathbb{P}^\perp\omega|_{L^2}\leq C\epsilon,\] since $I$ is clearly Lipschitz continuous on $L^2.$ Now, by translation invariance of $I$, it follows that  \[|I(\omega)-I(\mathbb{P}_*(\omega))|\leq C\epsilon.\] Since $I(\omega)=I(\omega_0),$ it follows (from two applications of the inequality) that
\[|I(\mathbb{P}_*\omega_0)-I(\mathbb{P}_*\omega)|\leq C\epsilon.\] Conservation of the $L^2$ norm and Lemmas \ref{J0Lemma},\ref{CalculusLemmaI}, and \ref{CalculusLemmaII} now give the result:
\[|\mathbb{P}_*\omega-\mathbb{P}_*\omega_0|_{L^2}\leq C\epsilon,\] when $(\alpha,\beta)$ is not extremal, while
\[|\mathbb{P}_*\omega-\mathbb{P}_*\omega_0|_{L^2}\leq C\sqrt{\epsilon},\] when $(\alpha,\beta)$ is extremal, so long as $\epsilon$ is sufficiently small.  This concludes the proof of the Main Theorem \ref{MainTheorem}.

\begin{remark}
Let us close by noting that the reason that the extremal points $\mathcal{E}$ enjoy weaker stability estimates is that they are critical points of the Casimir we have chosen when restricted to $\mathcal{S}_*$. It can be checked directly that the criticality of the extremal points is independent of the choice of the Casimir (as long as it is smooth). This indicates that the extremal points may really be special, though we have no proof that the first estimate in Theorem \ref{MainTheorem} is actually sharp.  
\end{remark}

\section{Acknowledgements}
The author thanks V. \v{S}ver\'ak  for his guidance and many helpful discussions over the years that helped shape the author's view of fluid mechanics and PDE. The author also thanks T. Drivas for multiple comments that improved this paper. He finally acknowledges funding from the NSF DMS-2043024 and the Alfred P. Sloan foundation.

\end{document}